\newcommand{\iid}{independent and identically distributed}
\newcommand{\V}{\text{\rm Var}\,}
\newcommand{\E}{\text{\rm E}\,}
\newcommand{\C}{\text{\rm Cov}\,}
\renewcommand{\leq}{\leqslant}
\renewcommand{\geq}{\geqslant}
\numberwithin{equation}{section}
\newtheorem{theorem}{Theorem}
\newtheorem{corollary}{Corollary}
\begin{document}

\title{Reverting Processes} 
\author
{{\sc \normalsize  By PETER CLIFFORD} \vspace{0.5ex} \\
{\em \normalsize Jesus College, Oxford, OX1 3DW, UK} \vspace{1.5ex} \\
{\sc \normalsize  and DAVID STIRZAKER} \vspace{0.5ex} \\
{\em \normalsize St.\ John's College, Oxford,  OX1 3JP, UK}}

\date{\vspace{-1ex}}

\maketitle
 
\begin{abstract}
We consider random processes that are history-dependent, in the sense that the distribution of the next step of the process at any time depends upon the entire past history of the process.  In general, therefore, the Markov property cannot hold, but it is shown that a suitable sub-class of such processes can be seen as directed Markov processes, subordinate to a random non-Markov directing  process whose properties we explore in detail. This enables us to describe the behaviour of the subordinated process of interest. Some examples, including reverting random walks and a reverting branching process, are given. \\

\noindent{\em Keywords:} History-dependent process; subordination; martingale; central limit theorem; random walk.

\end{abstract}

\section{Introduction}
History-dependent random processes arise naturally in seeking to describe many physical systems; a classic example is the self-avoiding random walk which seeks (among other applications) to capture aspects of the behaviour of certain long-chain molecules -- see \citet{HM,H}.  More recently, there has been considerable interest in other types of history-dependent random walks, such as: loop-erasing, Laplacian and self-reinforcing walks.  See \citet{P,T} and the references therein for details. 

Another classical class of history-dependent processes stems from the random (integer) sequences introduced by \citet{BSU}.  Among other things, they consider the sequence $(X_n; n\geqslant 1)$ defined by  
\begin{equation}\label{BSU}
X_{n+1} = X_n + X_{U(n)}, \ \ n \geqslant 2,
\end{equation}
where $X_1=x_1$ and $(U(n); n \geqslant 1)$ is a sequence of independent random variables such that for given $n$, $U(n)$ is uniformly distributed on $\{1,\dots,n\}$.

More recently, \citet{BK} have introduced two similar random integer sequences $(R_n; n \geqslant 1)$ and $(T_n; n \geqslant 1)$ defined as follows:
\begin{equation}\label{Rdef}
R_{n+1} = R_{U(n)} + X_n, \quad n \geqslant 1,
\end{equation}
with $R_1=0$ and
\begin{equation}\label{Tdef}
T_{n+1} = 1+ T_{U(n)}, \quad n \geqslant 1,
\end{equation}
with $T_1=0$, where $(U(n))$ is as in \eqref{BSU} and $(X_n; n \geqslant 1)$ is a sequence of independent identically distributed Rademacher random variables taking values $+1$ and $-1$ with probability $1/2$.  We shall refer to the sequence $(R_n)$ defined in \eqref{Rdef} as the simple symmetric reverting random walk (avoiding the label `random random walk' which was originally applied in \citet{BK} because this term is generally used in the context of an entirely different process). \citeauthor{BK} obtain the p.g.f.\ for both $R_n$ and $T_n$, together with their moments and consider other aspects of the behaviour of these processes.

Here, we initially investigate the properties of the stochastic processes $(R_n)$ and $(T_n)$; first establishing some limit theorems for $T_n$, as $n \to \infty$. Then we exploit a connexion between the two processes to obtain a structural understanding and a limit theorem for $R_n$.
These methods are then applied to several natural extensions, variants and generalisations of the reverting random walk
%==========================================================================================================================
\section{Reverting random walks}\label{sec:2}
We define the general uniformly reverting random walk by the recurrence
\begin{equation}\label{GenRdef}
R_{n+1} = R_{U(n)} + X_n, \quad n \geqslant 1,
\end{equation}
where the sequence $(U_n)$ is as in \eqref{BSU}, but now $X_1,X_2,\dots$ are independent and identically distributed with an arbitrary distribution. Without appreciable loss of generality, we shall assume that $R_1=0$.

First we note that there is a useful connexion between the sequence $(T_n)$ defined in \eqref{Tdef} and the sequence $(R_n)$ above, since as we will show
\begin{equation}\label{Rstar}
R_n = \sum_{r=1}^{T_n} X^*_r, \quad n \geqslant 1,
\end{equation}
where the empty sum is $0$, and the sequence $(X^*_r; r \geq 1)$, comprises \iid\ random variables having the same distribution as $X_1$.  That is to say, $(R_n)$ is a subordinate random walk directed by the process $(T_n)$, which we refer to as the \emph{reverting clock}. 

In fact, by coupling the recursions of \eqref{Tdef} and \eqref{GenRdef} together, $(X^*_r; 1 \leq r \leq T_n)$  can be shown to be a random subsequence of $(X_r; 1 \leq r < n)$, selected by the shared reversions $(U(r),1 \leq r < n)$.  This is most simply demonstrated by induction.  

Suppose, for the purposes of induction, that it is known that for all $k$ in $\{2,\dots,n\}$ 
the value $R_k$ is given by 
$$R_k = \sum_{j=1}^{T_k} X_{i_j(k)},$$
where $\{i_j(k); j=1,\dots,T_k\}$ is a subsequence of $1,\dots,(k-1)$.  The base case is valid, since $R_2=X_1$. Now consider $R_{n+1}$. Suppose that $U(n) = k$ then $R_{n+1}=R_k +X_n$ and $T_{n+1} = 1+ T_k$.  It follows that 
$$R_{n+1} = \sum_{j=1}^{T_k} X_{i_j(k)} + X_n = \sum_{j=1}^{T_{n+1}} X_{i_j(n+1)},$$
where
$$  i_j(n+1) = \begin{cases}
				i_j(k) & \text{if $j<T_{n+1}$},\\
				n & \text{if $j=T_{n+1}$},
				  		 \end{cases}
$$
which completes the inductive proof. 

Since the processes $(U(n))$ and $(X_n)$ are independent, it follows that $R_n$ has the structure of \eqref{Rstar}. In other words $R_n$ is the value of a random walk observed at a random time $T_n$.  We therefore turn to an analysis of the reverting clock, $(T_n)$.

Writing $m_n=\E T_n$ and $v_n = \V T_n$ we have, by conditioning on the $n$th reversion $U(n)$,  
\begin{equation}\label{BK1}
m_{n+1} = 1 + n^{-1}\sum_{k=1}^n m_k, \quad n \geq 1,
\end{equation}
and hence it can be shown that
\begin{equation}\label{BK2}
m_n = \sum_{k=1}^{n-1} k^{-1} = \log n + \gamma + n^{-1}/2 + o(n^{-1}),
\end{equation}
as $n \to \infty$, where $\gamma$ is Euler's constant.  Likewise
\begin{equation}\label{BK3}
v_n = \sum_{k=1}^{n-1} (k^{-1}-k^{-2}) = \log n + \gamma - \pi^2/6 +3 n^{-1}/2 +  o(n^{-1}),
\end{equation}
as $n \to \infty$.  In addition, the probability mass function (p.m.f.) of $T_n$ satisfies the recurrence
\begin{equation}\label{BK4}
\Pr(T_{n+1}= x) = n^{-1} \sum_{k=1}^n \Pr (T_k = x-1).
\end{equation}
The results \eqref{BK1}--\eqref{BK4} were obtained in \citet{BK}; and \eqref{BK4} was used to yield the probability generating function (p.g.f.) of $T_n$.  

Furthermore, by considering a continuized version of the difference equation \eqref{BK4}, it is argued in \citet{BK} that the distribution of $T_n$, suitably scaled should approach a normal limit as $n \to\infty$.  Here, we show that this result can be obtained by applying a central limit theorem to $T_n$ directly, as follows.
\begin{theorem} \label{thm:1}
For each fixed $x$, as $n \to \infty$, 
$$\Pr\left( \frac{T_n - m_n}{v_n^{1/2}} \leq x \right) \to \Phi(x),$$
where $\Phi$ is the standard normal distribution function.
\end{theorem}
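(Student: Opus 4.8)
The plan is to show that $T_n$ has the same distribution as a sum of independent Bernoulli random variables, and then to invoke the Lindeberg--Feller central limit theorem. First I would obtain the probability generating function of $T_n$. Conditioning on the $n$th reversion $U(n)$, exactly as in the derivation of \eqref{BK1} and \eqref{BK4}, and writing $G_n(s)=\E s^{T_n}$, one gets
\[
G_{n+1}(s)=\frac{s}{n}\sum_{k=1}^{n}G_k(s),\qquad G_1(s)=1 .
\]
I would then verify by induction on $n$ that this recursion is solved by
\[
G_n(s)=\prod_{k=1}^{n-1}\frac{k-1+s}{k},
\]
the inductive step reducing to the elementary identity $s\sum_{k=1}^{n}a_k=(n-1+s)\,a_n$ with $a_k=\prod_{j=1}^{k-1}(j-1+s)/j$.

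This product is precisely the generating function of $S_{n-1}:=\sum_{k=1}^{n-1}I_k$, where $I_1,I_2,\dots$ are independent and $I_k$ is Bernoulli with parameter $1/k$ (so that $I_1\equiv 1$). Hence $T_n\stackrel{d}{=}S_{n-1}$; in particular $\E S_{n-1}=\sum_{k=1}^{n-1}k^{-1}=m_n$ and $\V S_{n-1}=\sum_{k=1}^{n-1}(k^{-1}-k^{-2})=v_n$, in agreement with \eqref{BK2}--\eqref{BK3}. (Alternatively, one may read this representation off from the fact that $T_n$ is the depth of vertex $n$ in a random recursive tree, the $I_k$ being the indicators that vertex $k$ lies on the ancestral path of vertex $n$; but the generating-function route is the most economical.)

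It then remains to apply the Lindeberg--Feller theorem to the triangular array $\{(I_k-k^{-1})/v_n^{1/2}:1\leq k\leq n-1\}$. Its row variances sum to $1$ by construction, and since $|I_k-k^{-1}|\leq 1$ while $v_n=\log n+O(1)\to\infty$ by \eqref{BK3}, for every $\varepsilon>0$ the Lindeberg truncation is eventually identically zero. The theorem therefore gives $(S_{n-1}-m_n)/v_n^{1/2}\Rightarrow N(0,1)$, and hence $(T_n-m_n)/v_n^{1/2}\Rightarrow N(0,1)$; since $\Phi$ is continuous everywhere, this is exactly the claimed pointwise convergence of distribution functions.

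I do not expect a serious obstacle here. The only points needing care are the verification of the closed form for $G_n$ (a short induction) and the bookkeeping confirming that the $m_n$ and $v_n$ appearing in the statement really are the mean and variance of the Bernoulli sum; once that is in place the central limit theorem is immediate, because the summands are uniformly bounded while the variance diverges. Were one to avoid computing the generating function explicitly, the main effort would instead move to justifying directly that the ancestral-path indicators $I_k$ are independent, which is a somewhat more delicate combinatorial argument.
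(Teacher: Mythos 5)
Your proposal is correct and follows essentially the same route as the paper: both derive the p.g.f.\ recursion, solve it to exhibit $T_n$ as a sum of independent Bernoulli variables with success probabilities $1/k$, and then apply a central limit theorem for independent non-identically distributed summands. The only (immaterial) difference is that you verify the Lindeberg condition via the uniform bound on the summands and $v_n\to\infty$, whereas the paper checks the Lyapunov-type third-moment condition $\rho_n/v_n^{3/2}\to 0$.
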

\begin{proof}
Let $T_n$ have p.g.f.\ $G_n(s) = \E s^{T_n}$, then by conditioning on the reversion $U(n)$ we have
$$G_{n+1}(s) = \frac{s}{n} \sum_{k=1}^n G_k(s),
$$
whence, easily,
\begin{eqnarray}\label{Gprod}
G_{n+1}(s) 
&=& \frac{1}{n!} s(1+s)\dots(n-1+s) = s\left(\frac12 + \frac{s}2\right) \dots\left(\frac{n-1}n + \frac{s}n\right)\nonumber \\
&=& \prod_{k=1}^n g_k(s),
\end{eqnarray}
where $g_k(s)$ is the p.g.f\ of a random variable $Z_k$ taking the values $0$ and $1$ with probabilities $(k-1)/k$ and $1/k$, respectively.  In other words, the distribution of $T_{n+1}$ is that of the sum of $n$ independent Bernoulli variables with varying success probabilities. 

Writing $T_{n+1} = \sum_{k=1}^n Z_k$, we have $\rho_{n+1}=  \sum_{k=1}^n |Z_k - \E Z_k|^3 = \log n + o(\log n)$, and hence $\rho_n/v_n^{3/2} \to 0$ as $n \to \infty$. It follows that 
$$\Pr \left(\frac{T_n - m_n}{v_n^{1/2}} \leq x\right) \to \Phi(x), \quad \text{as $n \to \infty$},$$ from a central limit theorem for non-identically distributed independent summands; see for example \citet[p.\ 287]{Loeve} . 
\end{proof}
Equivalently, from \eqref{BK2} and \eqref{BK3}  
\begin{equation} \label{slutsky}
\frac{T_n - m_n}{v_n^{1/2}} = \frac{T_n -\log n}{(\log n)^{1/2}} A_n + B_n,
\end{equation}
where $A_n \to 1$ and $B_n \to 0$ as $n \to \infty$, and using Slutsky's lemmas we have
$$
\Pr \left(\frac{T_n -\log n}{(\log n)^{1/2}} \leq x\right) \to \Phi(x), \quad \text{as $n \to \infty$}.
$$ 
For later reference, it is useful to explain why $T_{n+1}$ can be expressed as $\sum_{k=1}^n Z_k$, the sum of $n$ independent Bernoulli variables.  Let $W_1 = \max\{m: \sum_{k=m}^n  Z_k =1\}$, i.e.\ the index of the last success in the sequence of Bernoulli trials.
It is straightforward to show that $W_1$ is uniformly distributed on $\{1,\dots,n\}$ (for example by considering the sequence in reverse order).  In the defining relation, $T_{n+1} = 1 + T_{U(n)}$, of \eqref{Tdef} we can then take $U(n)$ to be $W_1$.  To determine the value of the process at $W_1$ we must revert to an earlier time $W_2$ uniformly distributed on $\{1,\dots,W_1-1\}$, provided of course that  $W_1 > 1$. The value of $T_{n+1}$ is then 2 more than the value at time $W_2$.  Conditional on $W_1=w$, the variable $W_2$ can be represented as $\max\{m: \sum_{k=m}^{w-1}  Z_k =1\}$.  Unconditionally it then has the representation $W_2 = \max\{m: \sum_{k=m}^n  Z_k =2\}$.  The reversions can then be continued with  $W_j = \max\{m: \sum_{k=m}^n  Z_k =j\}$ at the $j$th stage, until $W_{j} = 1$ for the first time at some value $j=\tau$.  Since the count is incremented by one at each such reversion, it follows that $T_{n+1} = \tau$.  We note that the indices $\{k:Z_k=1\}$ mark the time-points in the history of $T_{n+1}$ at which  
increments occur.  It follows that the value of $T_{n+1}$ is the number of these time-points, i.e.\ the sum of the associated Bernoulli variables.     
\subsection{The time integral}
Next we consider the time-integral of the process $(T_n)$, namely the process $(S_n)$ where $S_n = \sum_{k=1}^n T_k$, with $S_1=0$.  This exhibits different convergent behaviour.  Let
$$M_n = n^{-1}(S_n - \E S_n),$$
so that $M_1 = 0$. Then we have this
\begin{theorem} \label{thm:2}
$(M_n)$ is an $L_2$-bounded martingale with respect to the natural filtration $(\mathcal{F}_n)$ generated by $(T_n)$ and $(U(n))$. Consequently, as $n \to \infty$, $M_n$ converges almost surely and in mean-square to a non-degenerate random variable $M$.  Furthermore, $\E M = 0$ and $\V M = 2- \pi^2/6.$
\end{theorem}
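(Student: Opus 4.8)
The plan is to verify the martingale identity directly, read off $L_2$-boundedness from a closed recursion for $\V S_n$, and then identify the moments of the limit by summing an explicit series. For the martingale property: since $U(n)$ is uniform on $\{1,\dots,n\}$ and independent of $\mathcal F_n$, the recursion $T_{n+1}=1+T_{U(n)}$ of \eqref{Tdef} gives $\E[T_{n+1}\mid\mathcal F_n]=1+n^{-1}\sum_{k=1}^n T_k=1+S_n/n$, and taking expectations recovers $m_{n+1}=1+\E S_n/n$, which is \eqref{BK1}. Subtracting these two relations gives $\E[\,S_{n+1}-\E S_{n+1}\mid\mathcal F_n\,]=(1+1/n)(S_n-\E S_n)$, i.e.\ $\E[M_{n+1}\mid\mathcal F_n]=M_n$. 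Integrability is immediate because $0\leq T_k\leq k-1$ (by induction on \eqref{Tdef}), so every $S_n$ and $M_n$ is bounded.

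Next I would derive a recursion for $w_n:=\V S_n$. From $S_{n+1}=S_n+T_{n+1}$, from $\V T_{n+1}=v_{n+1}$, and from the identity
$$\C(S_n,T_{n+1})=\C(S_n,T_{U(n)})=\frac{\E S_n^2}{n}-\frac{(\E S_n)^2}{n}=\frac{\V S_n}{n}$$
(obtained by conditioning on $\mathcal F_n$ and using $\E(T_{U(n)}\mid\mathcal F_n)=S_n/n$), one obtains the linear recursion $w_{n+1}=(1+2/n)w_n+v_{n+1}$ with $w_1=0$. Its homogeneous solution is $\phi_n=\prod_{k=1}^{n-1}(1+2/k)=n(n+1)/2$, so writing $w_n=\phi_n u_n$ telescopes the recursion to $u_{n+1}-u_n=2v_{n+1}/[(n+1)(n+2)]$, $u_1=0$, hence $u_n=\sum_{j=2}^{n}2v_j/[j(j+1)]$. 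By \eqref{BK3}, $v_j\sim\log j$, so this series converges; therefore $u_n\to u_\infty<\infty$, $w_n\sim\tfrac12u_\infty n^2$, and $\E M_n^2=w_n/n^2$ stays bounded. Thus $(M_n)$ is an $L_2$-bounded martingale and, by the martingale convergence theorem, converges almost surely and in mean square to a limit $M$.

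It remains to compute the moments. Mean-square convergence gives $\E M=\lim\E M_n=0$ (each $\E M_n=0$) and $\V M=\E M^2=\lim w_n/n^2=\tfrac12u_\infty=\sum_{j\geq2}v_j/[j(j+1)]$. Substituting $v_j=\sum_{k=1}^{j-1}(k^{-1}-k^{-2})$ from \eqref{BK3} and interchanging the order of summation, with $\sum_{j\geq k+1}1/[j(j+1)]=1/(k+1)$, gives $\V M=\sum_{k\geq1}(k^{-1}-k^{-2})/(k+1)=\sum_{k\geq1}(k-1)/[k^2(k+1)]$, and the partial fractions $(k-1)/[k^2(k+1)]=2/k-2/(k+1)-1/k^2$ collapse this to $2-\pi^2/6$; since $2-\pi^2/6>0$, $M$ is non-degenerate. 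I expect the only slightly delicate point to be the bookkeeping behind the exact recursion for $w_n$ — chiefly the identification $\C(S_n,T_{n+1})=\V S_n/n$ — after which $L_2$-boundedness, convergence, and the precise value of $\V M$ all follow mechanically; a cruder bound $w_{n+1}\leq(1+2/n)w_n+O(\log n)$ with $\phi_n\asymp n^2$ would already give $L_2$-boundedness but would force $\V M$ to be recomputed separately.
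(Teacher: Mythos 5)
Your proof is correct and takes essentially the same route as the paper: the same verification of the martingale property, the same recursion $\V S_{n+1}=(1+2/n)\V S_n+v_{n+1}$ (which you reach via the covariance identity $\C(S_n,T_{n+1})=\V S_n/n$ rather than the paper's conditional-variance decomposition), and the same series $\sum_k v_k/[k(k+1)]=2-\pi^2/6$ for the limiting variance. The only cosmetic difference is that the paper normalizes by setting $Q_n=\V S_n/[n(n+1)]$, which is exactly your substitution $w_n=\tfrac12 n(n+1)u_n$.
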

\begin{proof}
First note that $\E (T_{n+1}|\mathcal{F}_n ) = 1 + n^{-1} S_n$ and thus $\E T_{n+1} = 1 + n^{-1} \E S_n$.  Hence $n^{-1}\E S_n = \sum_{k=2}^n k^{-1}$ from \eqref{BK2}. Since $S_n$ is $\mathcal{F}_n$-measurable,
\begin{eqnarray*}
 \E(M_{n+1}|\mathcal{F}_n) &=&  \frac{1}{n+1}[\E(T_{n+1} | \mathcal{F}_n) + S_n]- \E \left(\frac{S_{n+1}}{n+1}\right)\\
                          &=&  \frac{1}{n+1}[1+n^{-1}S_n + S_n] - \sum_{k=2}^{n+1} k^{-1}\\
                          &=&  n^{-1} S_n - \sum_{k=2}^{n} k^{-1} = n^{-1}(S_n - \E S_n) = M_n,
\end{eqnarray*}                          
which is the martingale condition for $(M_n)$. Since $M_1 =0$, it follows that $\E(M_n)=0$, for all $n$. 

We now show that $\E(M_n^2)$, or equivalently $\V(S_n/n)$, is less than $K$ for all $n$, for some $K < \infty$, which will establish the desired convergence by the martingale mean-square convergence theorem \citep{Doob}.  Conditioning on $\mathcal{F}_{n-1}$ we have
\begin{equation}\label{VarSn}
\V S_n = \E [\V (S_n|\mathcal{F}_{n-1})] + \V[\E(S_n|\mathcal{F}_{n-1})].
\end{equation}  
Since $\E (S_n|\mathcal{F}_{n-1}) = S_{n-1} + 1 + S_{n-1}/(n-1) = 1+ [n/(n-1)] S_{n-1}$, we thus have
\begin{equation} \label{VES}
\V[\E(S_n|\mathcal{F}_{n-1})] = \left(\frac{n}{n-1}\right)^2 \V S_{n-1}.
\end{equation}
Furthermore, $\V (S_n|\mathcal{F}_{n-1}) = \V (S_{n-1}+T_n|\mathcal{F}_{n-1}) = \V(T_n |\mathcal{F}_{n-1})$ and since  
\begin{eqnarray*} 
\V T_n 
&=& \E [\V (T_n|\mathcal{F}_{n-1})] + \V[\E(T_n|\mathcal{F}_{n-1})] \\
&=& \E [\V (T_n|\mathcal{F}_{n-1})] + \V[1+(n-1)^{-1}S_{n-1}]\\
&=& \E [\V (T_n|\mathcal{F}_{n-1})] + \frac{1}{(n-1)^2}\V S_{n-1},
\end{eqnarray*} 
we have,
\begin{equation} \label{EVS}
\E [\V (T_n|\mathcal{F}_{n-1})] = \E [\V (S_n|\mathcal{F}_{n-1})] = \V T_n - \frac{1}{(n-1)^2} \V S_{n-1}.
\end{equation}
Substituting \eqref{VES} and \eqref{EVS} in \eqref{VarSn}, yields
$\V S_n = \V T_n + (n+1)/(n-1) \V S_{n-1},$
or  equivalently
$$Q_n = Q_{n-1} + \frac{v_n}{n(n+1)},$$
where $Q_n= n/(n+1)\V (S_n/n)$ and $v_n = \V T_n$.  It follows that
$$ Q_n = \sum_{k=1}^n \frac{v_k}{k(k+1)}.$$
To establish that $\V(S_n/n) < K$ for all $n$, it is only remains to show that $Q_n$ has a finite limit as $n \to \infty$.  Interchanging the order of summation (justified since all terms are non-negative) we have 
\begin{eqnarray*}
\sum_{k=1}^\infty \frac{v_k}{k(k+1)} &=& \sum_{k=2}^\infty \frac{1}{k(k+1)}\sum_{j=1}^{k-1}\left(\frac{1}{j} - \frac{1}{j^2}\right)\\
&=& \sum_{j=1}^\infty \left(\frac{1}{j} - \frac{1}{j^2}\right) \sum_{k=j+1}^\infty \left(\frac{1}k - \frac{1}{k+1}\right) =  
\sum_{j=1}^\infty \left(\frac{j-1}{j^2}\right) \frac{1}{j+1}\\
&=&\sum_{j=1}^\infty \left(\frac{2}{j(j+1)}-\frac{1}{j^2}\right) = 2 - \frac{\pi^2}{6}.
\end{eqnarray*} 
Furthermore since $\lim_{n \to \infty} \E (M_n^2) = \lim_{n \to \infty} Q_n$, and $M_n$ converges in mean-square to $M$, we have shown that $\E (M^2) = 2 - \pi^2/6.$
\end{proof}   
The determination of the distribution of M remains an open problem.   
Computer simulations indicate that $M$ has a continuous distribution with an asymmetric density that is unimodal and skewed to the right.   

However, we note that $T_n \leq n$ and $S_n \leq n(n+1)/2$, so the martingale differences satisfy
$$|M_{n+1} - M_n| = \left|\frac{T_{n+1} - m_{n+1}}{n+1} - \frac{S_n- \E S_n}{n(n+1)} \right| \leq \frac32.$$
We can thus put strong bounds on large deviations of $M_n$, by Hoeffding's inequality.
\begin{corollary}\label{cor:1}
For $m,n >0$, 
$$\C(T_n, T_{n+m}) = \frac{n-1}{n} \V M_{n-1} + \frac1{n}\V T_n.$$
\end{corollary}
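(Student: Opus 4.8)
The plan is to express the increments of $(S_n)$ through the martingale $(M_n)$ of Theorem~\ref{thm:2}, use the martingale property to show that $\C(T_n, T_{n+m})$ is the same for every $m \geq 1$, and then evaluate that common value by conditioning. First I would note that, since $S_k = k M_k + \E S_k$ and $T_{n+m} = S_{n+m} - S_{n+m-1}$ for $m \geq 1$,
$$T_{n+m} = (n+m)M_{n+m} - (n+m-1)M_{n+m-1} + \E T_{n+m},$$
with $\E T_{n+m}$ deterministic, whence
$$\C(T_n, T_{n+m}) = (n+m)\,\C(T_n, M_{n+m}) - (n+m-1)\,\C(T_n, M_{n+m-1}).$$

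Next I would establish the key fact that $\C(T_n, M_k) = \C(T_n, M_n)$ for every $k \geq n$. This is immediate: $T_n$ is $\mathcal{F}_n$-measurable and bounded, $(M_k)$ is an $L_2$-bounded martingale with $\E M_k = 0$, so by the tower property
$$\C(T_n, M_k) = \E(T_n M_k) = \E\left(T_n\,\E(M_k\mid\mathcal{F}_n)\right) = \E(T_n M_n) = \C(T_n, M_n).$$
Substituting into the previous display, the coefficients collapse and $\C(T_n, T_{n+m}) = \C(T_n, M_n)$, which indeed does not depend on $m$.

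It then remains to compute $\C(T_n, M_n) = n^{-1}\C(T_n, S_n)$. Writing $S_n = S_{n-1} + T_n$ gives $\C(T_n, S_n) = \C(T_n, S_{n-1}) + \V T_n$; and since $S_{n-1}$ is $\mathcal{F}_{n-1}$-measurable while $\E(T_n\mid\mathcal{F}_{n-1}) = 1 + (n-1)^{-1}S_{n-1}$ (as in the proof of Theorem~\ref{thm:2}), the tower property again gives $\C(T_n, S_{n-1}) = (n-1)^{-1}\V S_{n-1}$. As $\V M_{n-1} = (n-1)^{-2}\V S_{n-1}$, this is $(n-1)\V M_{n-1}$, so that
$$\C(T_n, T_{n+m}) = \frac{1}{n}\left[(n-1)\V M_{n-1} + \V T_n\right] = \frac{n-1}{n}\V M_{n-1} + \frac1n \V T_n,$$
as claimed (for $n=1$ both sides are $0$, under the convention $0\cdot\V M_0 = 0$).

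The hard part, such as it is, is the $m$-independence step: it relies on invoking the martingale property of $(M_k)$ together with the $\mathcal{F}_n$-measurability of $T_n$, and on using the centring $\E M_k = 0$ so that the covariances reduce to expectations of products. Everything else is routine bookkeeping with the identities for $m_n$, $v_n$ and $\V S_n$ already obtained in the proof of Theorem~\ref{thm:2}. (An alternative, slightly more laborious route avoids the martingale and simply shows $\C(T_n,T_{n+m+1})=\C(T_n,T_{n+m})$ by induction on $m$, conditioning each time on $\mathcal{F}_{n+m}$.)
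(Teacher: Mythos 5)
Your proof is correct and follows essentially the same route as the paper: both arguments use the martingale property of $(M_n)$ to show that $\C(T_n,T_{n+m})=\C(T_n,M_n)$ independently of $m$, and then evaluate $\C(T_n,M_n)$ by one-step conditioning on $\mathcal{F}_{n-1}$. The differences are only in bookkeeping — you telescope $T_{n+m}=(n+m)M_{n+m}-(n+m-1)M_{n+m-1}+\E T_{n+m}$ and compute $\C(T_n,S_{n-1})$ directly, whereas the paper applies the conditional covariance decomposition twice and invokes the identity \eqref{EVS}.
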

\begin{proof}
Let $(\mathcal{F}_n)$ be the filtration as in theorem \ref{thm:2}, then
$$\E(T_{m+n} | \mathcal{F}_{n+m-1}) = 1 + \frac{S_{n+m-1}}{n+m-1} = 1 + M_{n+m-1} + \E M_{n+m-1},$$
and using the martingale property of $(M_n)$ we have
$$\E(T_{m+n} | \mathcal{F}_{n}) = 1 + M_{n} + \E M_{n+m-1}.$$
Using the conditional covariance relationship, we have
\begin{eqnarray*}
\C(T_n, T_{n+m}) &=& \E[\C(T_n,T_{n+m}| \mathcal{F}_n)] + \C[T_n, \E(T_{m+n}|\mathcal{F}_n)]\\
 &=& 0+ \C(T_n, M_n),
\end{eqnarray*}
and
\begin{eqnarray*}
\C(T_n, M_n) 
&=& \E[\C(T_n,M_n|\mathcal{F}_{n-1})] + \C[\E(T_n|\mathcal{F}_{n-1}),\E(M_n|\mathcal{F}_{n-1})]\\
&=& \E[\C(T_n,(T_n+S_{n-1})/n|\mathcal{F}_{n-1})] + \C(1+M_{n-1} + \E M_{n-1},M_{n-1})\\
&=& \E[\V(T_n | \mathcal{F}_{n-1})]/n + \V M_{n-1}\\
&=& \frac{n-1}{n} \V M_{n-1} + \frac1{n}\V T_n, 
\end{eqnarray*} 
using \eqref{EVS} at the final stage. 
\end{proof}  
We return to the reverting random walk.  It is immediate from \eqref{Rstar} and \eqref{Gprod} that the characteristic function of $R_{n+1}$ is
given by
$$\Psi_{n+1}(\theta) = G_{n+1}(\phi_X(\theta)) = \prod_{k=1}^n \psi_k(\theta),$$
where $\psi_k(\theta) = (k-1)/k + \phi_X(\theta)/k$ is the characteristic function of $Z_k X_1$.

For the simple reverting random walk with 
$\Pr(X_1=1)=p$ and $\Pr(X_1 =-1) = q$, where $q=1-p$, the p.g.f.\ of $R_{n+1}$ is given by 
$$ \E\!\!\left(s^{R_{n+1}}\right) =  \prod_{k=1}^n \left(\frac{p s}{k} + \frac{k-1}{k} + \frac{qs^{-1}}{k}\right) 
= \sum_{k=1}^n \genfrac{[}{]}{0pt}{}{n}{k}\left(p s + qs^{-1}\right)^k,$$
where $\genfrac{[}{]}{0pt}{}{n}{k}$ are the Stirling numbers of the first kind.
From the first form we see that $R_{n+1}$ has the same distribution as the position of an inhomogeneous random walk after $n$  independent steps of size $1, 0$ and $-1$ with corresponding  probabilities $p/k$, $(k-1)/k$ and $q/k$ for the $k$th step, $k=1,2,\dots,n$. From the second form, we can obtain the probability mass function of $R_{n+1}$ in terms of Stirling numbers and binomial coefficients. We note that this supplies the solution to an open problem posed in \citet{BK}.

Finally, we note that if the sequence $(\sum_{k=1}^n X_k;n \geq 1)$ obeys a central limit theorem as $n \to \infty$ then so does the sequence    
$(R_n)$.  This follows from Anscombe's theorem \citep{A}, when we remark that $T_n /\log(n) \overset{\text{\rm P}}{\to} 1$. \citet{BK} consider a continuized limit of the difference equation satisfied by the p.m.f.\ of the simple reverting random walk in the case $p=1/2$, and obtain a diffusion equation from which they postulate the limiting normal distribution.  Their result is a corollary of the general observation above.

\section{Variants of reverting random walks}
There are many obvious natural variants, extensions and generalizations of the reverting random walk; we briefly consider some of them.
\subsection{A special class of non-uniform reversions}
Suppose now that the reverting variable $U(n)$ has a non-uniform distribution on $\{1,\dots,n\}$.  A flexible class of such distributions can be constructed by considering a sequence of positive numbers, $\alpha_1,\alpha_2,\dots$, and defining $U(n)$ by
$$\Pr(U(n)=k) = \frac{\alpha_k}{\sum_{j=1}^n \alpha_j}, \quad n \geq 1.$$  As before, the reverting clock is defined by $T_{n+1} = 1 + T_{U(n)}$, with $T_1=0$ and the reverting walk is defined by $R_{n+1} = R_{U(n)} + X_n$ with $R_1 =0$.

Proceeding as in the uniform case, we have 
$$G_{n+1}(s) =  \E\left(s^{T_{n+1}}\right) = \frac{s}{\sum_{j=1}^n \alpha_j}\sum_{k=1}^n \alpha_k G_k(s),$$
so that 
$$G_{n+1}(s) = \left(\frac{s \alpha_n}{\sum_{j=1}^n \alpha_j} + \frac{\sum_{j=1}^{n-1} \alpha_j}{\sum_{j=1}^n \alpha_j}\right)G_n(s)= \prod_{k=1}^n(s p_k + q_k),$$
where $p_k = \alpha_k/\sum_{j=1}^k \alpha_j$, and $p_k+q_k =1$.  

We see that $T_{n+1}$ can be represented as $T_{n+1} = \sum_{k=1}^n Z_k$ where $Z_1,\dots,Z_n$ are independent Bernoulli variables with $\Pr(Z_k=1) = p_k$. Consequently 
$$m_{n} = \E T_{n} = \sum_{k=1}^{n-1} p_k\quad \text{and} \quad v_{n} = \V T_{n} =  \sum_{k=1}^{n-1} p_k q_k.$$
First note that if $v_n$ has a finite limit, $v$ then $T_n - m_n$ converges in distribution to a random variable with variance $v$; and in general, if $v_n/m_n^2 \to 0$ as $n \to \infty$ then $T_n/m_n \overset{\text{\rm P}}{\to} 1$.  

As in the case of uniform reversion, a central limit theorem applies to $T_n$ when $\rho_n/v^{3/2}_n \to 0$ as $n \to \infty$, where
$\rho_{n} = \sum_{k=1}^{n-1} \E|Z_k - p_k|^3$. For illustration, we consider reversion distributions for which $\alpha_k = k^\beta$, for arbitrary  $\beta$. For this class, the asymptotic behaviour of $p_k$ as $k \to \infty$ is given by
$$
p_k \sim  
\begin{cases}
	(\beta+1) k^{-1} & \text{if $\beta > -1$}\\
	(k \log k)^{-1} & \text{if $\beta = -1$} \\
	C_\beta k^{\beta} & \text{if $\beta < -1$.}
\end{cases}
$$
It follows that when $\beta > -1$, the quantities $m_n$,$v_n$ and $\rho_n$ each grow as $\log n$; for $\beta = -1$, the growth is  $\log(\log n)$ and when $\beta < -1$ each has a finite limit. We conclude that $\rho_n/v^{3/2}_n \to 0$ when $\beta \geq -1$ and hence, in this case, the distribution of $(T_n - m_n)v_n^{-1/2}$ converges to the standard normal form.  We also conclude that $T_n/m_n \overset{\text{\rm P}}{\to} 1$ under this condition, so that by Anscombe's theorem \citep{A}, the associated reverting random walk obeys the central limit theorem, provided the variables $(X_n)$ meet the standard summand conditions of this theorem.

Finally, we note that a martingale can be exploited in a fashion similar to that of theorem \ref{thm:2} and corollary \ref{cor:1}. 
\begin{theorem}\label{thm:x}
Let 
$$M_{n} = \frac{\sum_{k=1}^n \alpha_k (T_k - m_k)}{\sum_{k=1}^n \alpha_k}, \quad n \geq 1.
$$
If $\sum_{k=1}^\infty p^2_k v_k < \infty$ then $(M_n)$ is an $L_2$-bounded martingale with respect to the natural filtration $(\mathcal{F}_n)$ generated by $(T_n)$ and $(U(n))$. Consequently, as $n \to \infty$, $M_n$ converges almost surely and in mean-square to a non-degenerate random variable $M$, with mean zero and finite variance.    
\end{theorem}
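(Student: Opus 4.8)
The plan is to mimic the proof of Theorem~\ref{thm:2}. Write $A_n=\sum_{k=1}^n\alpha_k$ and $D_n=\sum_{k=1}^n\alpha_k(T_k-m_k)$, so that $M_n=D_n/A_n$ and, since $T_1=m_1=0$, $M_1=0$. Conditioning on the reversion $U(n)$, exactly as in the derivation of $G_{n+1}$, gives $\E(T_{n+1}\mid\mathcal F_n)=1+A_n^{-1}\sum_{k=1}^n\alpha_k T_k$; subtracting the analogous identity $m_{n+1}=1+A_n^{-1}\sum_{k=1}^n\alpha_k m_k$ yields $\E(T_{n+1}-m_{n+1}\mid\mathcal F_n)=A_n^{-1}D_n=M_n$. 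Consequently
$$\E(M_{n+1}\mid\mathcal F_n)=A_{n+1}^{-1}\bigl(D_n+\alpha_{n+1}M_n\bigr)=A_{n+1}^{-1}D_n\Bigl(1+\tfrac{\alpha_{n+1}}{A_n}\Bigr)=\frac{D_n}{A_n}=M_n,$$
so $(M_n)$ is a martingale with $\E M_n=0$ for all $n$; this part needs no summability assumption.

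I would then turn to the martingale increments. A one-line rearrangement gives $M_{n+1}-M_n=p_{n+1}\bigl[(T_{n+1}-m_{n+1})-M_n\bigr]$, and since $M_n=\E(T_{n+1}-m_{n+1}\mid\mathcal F_n)$ the conditional second moment of the bracket equals $\V(T_{n+1}\mid\mathcal F_n)$, whence $\E[(M_{n+1}-M_n)^2]=p_{n+1}^2\,\E[\V(T_{n+1}\mid\mathcal F_n)]$. Decomposing $v_{n+1}=\V T_{n+1}$ over $\mathcal F_n$ and using $\V[\E(T_{n+1}\mid\mathcal F_n)]=\V M_n=\E(M_n^2)$ gives $\E[\V(T_{n+1}\mid\mathcal F_n)]=v_{n+1}-\E(M_n^2)$. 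By orthogonality of the increments and $M_1=0$,
$$\E(M_n^2)=\sum_{k=1}^{n-1}p_{k+1}^2\bigl(v_{k+1}-\E(M_k^2)\bigr)\leq\sum_{k=2}^{\infty}p_k^2 v_k<\infty$$
under the hypothesis, so $(M_n)$ is $L_2$-bounded. Doob's mean-square martingale convergence theorem then delivers $M_n\to M$ almost surely and in $L_2$, with $\E M=0$ and $\V M=\lim_n\E(M_n^2)<\infty$.

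It remains to verify non-degeneracy. The increment computations give the recursion $\E(M_{n+1}^2)=(1-p_{n+1}^2)\E(M_n^2)+p_{n+1}^2 v_{n+1}$. Because $p_1=1$ we have $v_2=0$ and $\E(M_2^2)=0$, but $p_2=\alpha_2/(\alpha_1+\alpha_2)\in(0,1)$, so $v_3=p_2 q_2>0$ and hence $\E(M_3^2)=p_3^2 v_3>0$. Iterating the recursion from $n=3$ yields $\E(M_n^2)\geq\E(M_3^2)\prod_{k=4}^{n}(1-p_k^2)$. Since $v_k$ is nondecreasing, $v_k\geq v_3>0$ for $k\geq3$, so the hypothesis $\sum_k p_k^2 v_k<\infty$ already forces $\sum_k p_k^2<\infty$; as $p_k<1$ for $k\geq2$, the infinite product $\prod_{k\geq4}(1-p_k^2)$ converges to a strictly positive value, and therefore $\V M=\lim_n\E(M_n^2)>0$.

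The main obstacle is this last, non-degeneracy, step rather than the martingale convergence, which is essentially Theorem~\ref{thm:2} transcribed with $\alpha_k$-weights in place of uniform ones; the things to spot are that the summability hypothesis, combined with $v_k$ being bounded away from $0$, delivers $\sum_k p_k^2<\infty$, and that the iteration of the variance recursion must start at $n=3$, since $p_1=1$ makes $v_2$ and $\E(M_2^2)$ degenerate.
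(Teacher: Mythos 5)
Your proof is correct, and at its core it is the paper's argument: both rest on the same one-step variance recursion $\E(M_{n}^2)=(1-p_{n}^2)\E(M_{n-1}^2)+p_{n}^2 v_{n}$, obtained from the conditional variance decomposition of $T_{n}$ given $\mathcal F_{n-1}$ together with $\E(T_{n}\mid\mathcal F_{n-1})=m_{n}+M_{n-1}$. Where you differ is in how the recursion is exploited. The paper divides through by the product $J_n=[\prod_{k=2}^n(1-p_k^2)]^{-1}$ and solves the recursion exactly, $\V(M_n)=J_n^{-1}\sum_{k=2}^n p_k^2 J_k v_k$, which is why it needs $\sum_k p_k^2<\infty$ (deduced from the hypothesis plus $v_k$ increasing) already at the boundedness stage. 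You instead use orthogonality of the martingale increments and the nonnegativity of $\E[\V(T_{k+1}\mid\mathcal F_k)]=v_{k+1}-\E(M_k^2)$ to get the cleaner bound $\E(M_n^2)\leq\sum_{k=2}^n p_k^2v_k$, so $L_2$-boundedness follows from the hypothesis alone without passing through $J_n$. You also spell out the non-degeneracy: noting $p_1=1$, $v_2=0$, $\E(M_3^2)=p_3^2v_3>0$, and iterating $\E(M_n^2)\geq\E(M_3^2)\prod_{k=4}^n(1-p_k^2)$ with $\sum_k p_k^2<\infty$ (which, as you observe, the hypothesis forces because $v_k\geq v_3>0$ for $k\geq3$) gives $\V M>0$. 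The paper leaves this positivity implicit in its exact formula for $\lim_n Q_n/J_n$, so your explicit lower bound is a genuine, if small, addition; everything you assert checks out, including the identity $M_{n+1}-M_n=p_{n+1}[(T_{n+1}-m_{n+1})-M_n]$.
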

\begin{proof}
The proof of the martingale property is similar to that in theorem \ref{thm:2}, where $\alpha_k=1$ for all $k$. It is straightforward to check that $\E M_n =0$ for all $n$. The proof in theorem \ref{thm:2} can also be modified to show that $\V (M_n) < K$ for all $n$ under the conditions of the theorem.   

Defining $S_n = \sum_{k=1}^n \alpha_k T_k$, we have 
$\E(S_n | \mathcal{F}_{n-1}) = \alpha_n + S_{n-1} A_n/A_{n-1},$
where $A_n = \sum_{k=1}^n \alpha_k$, so that
\begin{equation}\label{VE1}\V[\E(S_n|\mathcal{F}_{n-1})] =  \V (S_{n-1}) A_n^2/A_{n-1}^2.\end{equation}
As in theorem \ref{thm:2}, 
$ \V(S_n | \mathcal{F}_{n-1}) = \alpha_n^2 \V (T_n | \mathcal{F}_{n-1}),$
and consequently
\begin{equation}\label{EV1}
\E [\V (S_n|\mathcal{F}_{n-1})] = \alpha_n^2\left\{ v_n - A_{n-1}^{-2} \V(S_{n-1})\right\},
\end{equation}
where $v_n = \V(T_n)$. From \eqref{VE1} and \eqref{EV1} we then have
$$\V(S_n) = \alpha_n^2 v_n + \V(S_{n-1})[A_{n}^2 - \alpha_n^2]/A_{n-1}^{2}$$
and dividing by $A_n^2$
\begin{equation}\label{V1}
\V(M_n) = p_n^2 v_n + (1-p_n^2) \V(M_{n-1}),
\end{equation}
where $p_n = \alpha_n/\sum_{j=1}^n \alpha_j$.  

Define $J_n = [\prod_{k=2}^n (1-p_k^2)]^{-1}$.  Since $\sum_{k=1}^\infty p^2_k v_k < \infty$ and $v_k$ is increasing, it follows that $\sum_{k=1}^\infty p^2_k < \infty$ and hence $J_n$ has a non-zero finite limit as $n \to \infty$.  Now define $Q_n$ to be $J_n \V (M_n)$. Then from \eqref{V1} we have $Q_n = p_n^2 v_n J_n +Q_{n-1},$
so that 
$$Q_n = \sum_{k=2}^n p_k^2 J_k v_k, \quad n \geq 2.$$  It follows that $Q_n$ and hence $\V(M_{n})$ has a finite limit as $n \to \infty$ and we conclude that $\V (M_n) < K$ for some fixed $K$ for all $n$. 
\end{proof} 
\subsection{Occasionally reverting walks}
In general, one might permit the walk to revert only at the instants of some suitable point process on the positive integers.  We consider a simple case of this, in which the walk attempts to revert with probability $q>0$ (independently of the past), or continues without reversion with probability $p$, where $p+q=1$.  As in section \ref{sec:2}, the walk at any step $n$ can be represented in terms of a reverting clock, $(T_n)$, given by the recursion
\begin{equation}\label{ORWdef}
T_{n+1} = 1 + T_{V(n)}, \quad n \geq 1,\quad \text{with} \quad V(n) = n (1-\mathcal{I}_n) + U(n)\mathcal{I}_n,
\end{equation}
where $(\mathcal{I}_n)$ are independent Bernoulli variables each with success probability $q$ and as before the reversions $(U(n))$ are independent with $U(n)$ uniformly distributed on $\{1,\dots,n\}$.    
Thus the clock advances one unit of time with probability $p+n^{-1}q$, stays still with probability $n^{-1}q$ and otherwise jumps backwards.

We can investigate this generalized reverting clock through its bivariate generating function:
$$G(s,z) = \sum_{k=1}^\infty z^{k-1} \E\!\!\left(s^{T_k}\right) = \sum_{k=1}^\infty z^{k-1}  G_k(s),$$
in an obvious notation.  Note that $G_1(s) =1$ and $G_2(s) =s$.  

By conditioning on the events at stage $n$ we have
$$G_{n+1}(s) = s p G_{n}(s) + \frac{s q}{n} \sum_{k=1}^n G_k(s), \quad n \geq 1.$$
Rearranging, this gives
$$(n+1)h_{n+2}(s) = n s p h_{n+1}(s) + (s-1) G_{n+1}(s),$$
where $h_n(s) = G_n(s)- G_{n-1}(s).$  Defining $H(s,z)= \sum_{n=1}^\infty z^{n-1} h_{n}(s)$, multiplying both sides by $z^n$ and summing over all $n$, gives
$$\frac{\partial H(s,z)}{\partial z} = z s p\frac{\partial H(s,z)}{\partial z} + \frac{(s-1)}{1-z} H(s,z), $$
and hence integrating and using $H(s,z) = (1-z)G(s,z),$ 
\begin{equation} \label{G}
G(s,z) = (1 - s p z)^{(s-1)/(1- s p)}(1-z)^{-q s/(1- s p)}.
\end{equation}
Recalling that 
$$\frac{\partial \log G(s,z)}{\partial s}\bigg|_{s=1} = (1-z) \sum_{k=1}^\infty z^{k-1} m_k,$$
where $m_k = \E T_k$, we compute
$$(1-z) \sum_{k=1}^\infty z^{k-1} m_k = q^{-1} [\log(1- z p) - \log(1-z)].$$
Thus
\begin{equation} \label{T}
m_{n+1} = q^{-1} \left\{\sum_{k=1}^n k^{-1}(1 - p^k)\right\} = q^{-1}\{ \log n + \gamma + \log q\} + O(n^{-1}),
\end{equation}
as $n \to \infty.$  Effectively this reflects the fact that times between reversions are now geometrically distributed random variables with mean $1/q$; compare with the mean of the basic reverting clock in \eqref{BK2}.  Higher moments of $T_n$ and the p.g.f.\ $G_n(s)$ are likewise available, in principle, from $G(s,z)$.  

The implications for $(R_n)$, the occasionally reverting random walk subordinate to the process $(T_n)$, are essentially the same as in section \ref{sec:2}.  In any particular case, the bivariate characteristic function of the walk may be written as 
$$\phi(\theta,z) = \sum_{k=1}^\infty z^{k-1} \E({\rm e}^{i \theta R_k}) = G({\rm e}^{i\theta},z),$$
where $G(\cdot,\cdot)$ is given by  \eqref{G}.  

If the variables $(X_n)$ are such that their partial sums obey the central limit theorem then, suitably standardized, $R_n$ has a normal limiting distribution.  Again this follows from Anscombe's theorem, for example by showing that $m_n^{-2}\V( T_n) \to 0$ as $n \to \infty$, and hence that $m_n^{-1} T_n \overset{\text{\rm P}}{\to} 1$.  Note that $m_n^{-2}\V( T_n) \to 0$ if and only if $m_n^{-2}w_n \to 1$, where $w_n = \E(T_n^2)$.  We establish this by deriving an expression for $w_n$ directly as follows.  

Conditioning on the events at stage $n$, we have 
\begin{equation} \label{m}
m_{n+1} = 1 + p m_n +\frac{q}{n} \sum_{k=1}^n m_k
\end{equation}
and 
\begin{equation} \label{w}
w_{n+1} = 1+2pm_n + p w_n +\frac{2q}{n} \sum_{k=1}^n m_k + \frac{q}{n} \sum_{k=1}^n w_k.
\end{equation}
Differencing \eqref{w} and using \eqref{m}, we have
$$\frac{n+1}{p^n} (w_{n+2}-w_{n+1}) = \frac{n}{p^{n-1}} (w_{n+1} - w_n) + \frac{2(1-p^{n+1})}{q p^{n}} + 
\frac{(2 m_{n+1} -1)}{p^{n}},$$
where $w_1 = m_1 = 0$, and $w_2 = m_2 = 1$.
Thus,
\begin{eqnarray*}
w_{n+1}-w_n 
&=& \frac{p^n}{n+1} \sum_{k=1}^{n+1} \left[\frac{2(1-p^k)}{q p^{k-1}} + \frac{(2 m_k -1)}{p^{k-1}}\right]\\
&=& \frac{(1+p)(1-p^{n+1})}{q^2(n+1)} - \frac{2p^{n+1}}{q} + \frac{p^n}{n+1} \sum_{k=1}^{n+1} \frac{2 m_k}{p^{k-1}}.
\end{eqnarray*}
For large $n$, using \eqref{T}, we then have $w_{n+1}-w_n  = 2(q^2n)^{-1} \log n + o(n^{-1} \log n)$. It follows that 
$$ w_n = q^{-2} \log^2 n + o(\log^2 n),$$
and hence, again with \eqref{T}, we have  $m_n^{-2} w_n \to 1$ as $n \to \infty$, as claimed.

We will show that, suitably standardized, $T_n$ has a limiting normal distribution. 
\begin{theorem} \label{thm:1.1}
Let $(T_n)$ be the process defined in \eqref{ORWdef}, and let $m_n$ and $v_n$ be the mean and variance of $T_n$. 
For each fixed $x$, as $n \to \infty$, 
$$\Pr\left( \frac{T_n - m_n}{v_n^{1/2}} \leq x \right) \to \Phi(x),$$
where $\Phi$ is the standard normal distribution function.
\end{theorem}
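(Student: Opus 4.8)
The plan is to work directly from the explicit bivariate generating function \eqref{G} rather than through a sum of independent variables. Note first that, for $0<p<1$, the $G_n(s)$ satisfy a genuine three-term recurrence (eliminate the partial sum $\sum_{k}G_k(s)$ between the defining relations at $n$ and $n+1$) and are not products of linear factors, so $T_n$ is in general \emph{not} a sum of independent Bernoulli variables --- already $G_5$ has a non-real root when $p=q=\tfrac12$ --- and the argument of Theorem~\ref{thm:1} does not carry over. Put $a(s)=qs/(1-ps)$ and $b(s)=(s-1)/(1-ps)$, so that \eqref{G} reads $G(s,z)=(1-psz)^{b(s)}(1-z)^{-a(s)}$ and $G_n(s)=[z^{n-1}]G(s,z)$. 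For $s$ in a fixed complex neighbourhood $\mathcal N$ of $1$ one has $|ps|<1$, so inside a disc of radius strictly larger than $1$ the only singularity of $z\mapsto G(s,z)$ is the algebraic one at $z=1$, near which the analytic factor $(1-psz)^{b(s)}$ takes the value $(1-ps)^{b(s)}$.

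The first and main step is to establish, \emph{uniformly} for $s\in\mathcal N$,
$$G_n(s)=\frac{(1-ps)^{b(s)}}{\Gamma(a(s))}\,n^{\,a(s)-1}\,\bigl(1+o(1)\bigr),$$
either by the Flajolet--Odlyzko singularity transfer theorem, or directly from the convolution $G_n(s)=\sum_{k\geq0}\binom{b(s)}{k}(-ps)^{k}\,\Gamma(a(s)+n-1-k)\big/\bigl(\Gamma(a(s))\,(n-1-k)!\bigr)$, in which $|ps|<1$ dominates the tail in $k$ while $\Gamma(a(s)+m)/m!\sim m^{\,a(s)-1}$ as $m\to\infty$ for each fixed $k$.

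The rest is routine. Taking logarithms, $\log G_n(e^{u})=(a(e^{u})-1)\log n+b(e^{u})\log(1-pe^{u})-\log\Gamma(a(e^{u}))+o(1)$, uniformly for $u$ in a complex neighbourhood of $0$; since the displayed terms are analytic there, Cauchy's estimates show that the $j$th cumulant of $T_n$ is $\kappa_{j,n}=c_j\log n+O(1)$, where $c_j$ is the $j$th derivative of $a(e^{u})-1$ at $u=0$. In particular $m_n=\kappa_{1,n}=a'(1)\log n+O(1)=q^{-1}\log n+O(1)$ (consistent with \eqref{T}) and, crucially,
$$v_n=\kappa_{2,n}=\bigl(a''(1)+a'(1)\bigr)\log n+O(1)=\frac{1+p}{q^{2}}\,\log n+O(1)\longrightarrow\infty,$$
which sharpens the crude bound $v_n=o(\log^{2}n)$ implicit above. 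Now fix $\theta$ and set $u=i\theta\,v_n^{-1/2}\to0$; the cumulant expansion of $\log\E e^{u(T_n-m_n)}$, valid for $|u|$ small, gives
$$\log\E\exp\!\Bigl(\frac{i\theta\,(T_n-m_n)}{v_n^{1/2}}\Bigr)=\log G_n(e^{u})-m_n u=-\frac{\theta^{2}}{2}+\sum_{j\geq3}\frac{\kappa_{j,n}}{j!}\,u^{j}=-\frac{\theta^{2}}{2}+O\bigl((\log n)^{-1/2}\bigr),$$
since $\sum_{j\geq3}\kappa_{j,n}u^{j}/j!=O(|u|^{3}\log n)$ and $v_n\sim\frac{1+p}{q^{2}}\log n$. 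Hence the characteristic function of $(T_n-m_n)\,v_n^{-1/2}$ tends to $e^{-\theta^{2}/2}$ for every $\theta$, and the theorem follows from L\'evy's continuity theorem.

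The hard part is this first step: the singular expansion of $G(s,z)$ at $z=1$ must hold uniformly in $s$ over a genuine \emph{complex} neighbourhood of $1$, since both the differentiation used to identify the growth of $v_n$ and the evaluation at $s=e^{i\theta v_n^{-1/2}}$ depend on it --- this is exactly what the transfer theorem provides. Equivalently, the whole conclusion is an instance of Hwang's quasi-powers theorem applied to $G_n(s)=A(s)\,B(s)^{\log n}\bigl(1+o(1)\bigr)$ with $A(s)=(1-ps)^{b(s)}/\Gamma(a(s))$ and $B(s)=e^{\,a(s)-1}$ analytic and non-vanishing near $s=1$, which returns the normal limit together with mean $\sim q^{-1}\log n$ and variance $\sim q^{-2}(1+p)\log n$. (A more probabilistic route would decompose the backward exploration of the clock into geometric-length runs of non-reverting steps separated by uniform reversions, and combine --- by an Anscombe-type central limit theorem for randomly indexed sums \citep{A} --- a limit theorem for the number of reversions, which is asymptotically a uniform reverting clock in the sense of Theorem~\ref{thm:1}, with one for the total run length; but the boundary effects make this more delicate than the analytic argument.)
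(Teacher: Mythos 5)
Your proof is correct, but it follows a genuinely different route from the paper. The paper never touches the generating function \eqref{G} in its proof: instead it represents $T_{n+1}=\sum_{k=1}^n Z_k$, where the $Z_k$ are the indicators of the time-points at which increments occur, observes that in reverse time these form an inhomogeneous two-state Markov chain with transition probabilities \eqref{pi}, and then invokes Dobrushin's central limit theorem for non-stationary Markov chains \citep{D,SV}, checking that the contraction coefficient is bounded below by $q>0$ and that $\sum_k \V(Z_k)\to\infty$. Your opening observation --- that the independence underlying theorem \ref{thm:1} fails here --- is exactly the complication the paper confronts, and the Markov-chain-plus-Dobrushin device is its way around it; your way around it is analytic, via uniform singularity analysis of \eqref{G} in a complex neighbourhood of $s=1$ and Hwang's quasi-powers framework. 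Each has its advantages: the paper's argument is short and structural once Dobrushin's theorem is granted, and it parallels the Bernoulli decomposition of theorem \ref{thm:1}; yours exploits the closed form \eqref{G} (which the paper derives but uses only for the mean) and delivers strictly more information, notably the variance asymptotics $v_n\sim(1+p)q^{-2}\log n$, which the paper never computes (it only shows $m_n^{-2}w_n\to1$, i.e.\ $v_n=o(\log^2 n)$, and these are consistent with your sharper estimate and with \eqref{T}), and it would extend with little extra work to Berry--Esseen rates or a local limit theorem. You are also right that the uniformity of the singular expansion in a complex neighbourhood of $s=1$ is the real content of your first step, and either the transfer theorem or your direct convolution estimate handles it, since $a(s)$ and $b(s)$ are analytic near $s=1$ with $a(1)=1$. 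One trivial slip: for $p=q=\tfrac12$ the first p.g.f.\ with non-real roots is already $G_4(s)=s(3s^2+2s+1)/6$, not $G_5$; nothing in your argument depends on this.
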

\begin{proof}
As in theorem \ref{thm:1} and the discussion thereafter, we will show that $T_{n+1}$ can be represented as a sum of Bernoulli variables: $\sum_{k=1}^n Z_k$, where $\{k:Z_k=1\}$ mark the time-points in the history of $T_{n+1}$ at which its increments occur.  The complication here is that the variables are no longer independent; they form an inhomogeneous first-order Markov chain.  

The chain is most conveniently defined in reverse time.  We start by defining $Z_n$ to be $1$ when $V(n)=n$ and zero otherwise, so that  $\Pr(Z_n=1) = p+q/n$, from \eqref{ORWdef}.  Note that the increment  $T_{n+1} = 1 + T_{n}$ occurs if and only if $Z_n=1$. If $Z_n=1$, we define $Z_{n-1}$ to be $1$  when $V(n-1) = n-1$ and $0$ otherwise. The event $Z_{n-1}=1$ corresponds to the increment  $T_n= 1 + T_{n-1}$ and the conditional probability of this event given $Z_n=1$ is then $p + q/(n-1)$.

Now suppose that $Z_n=0$. This means that $V(n) <n$. In this case, the conditional probability that $V(n) = n-1$ is $1/(n-1)$, resulting in the increment $T_{n+1} = 1 + T_{n-1}$ at the time-point $n-1$.   The transition probabilities are  
\begin{eqnarray*}
\Pr(Z_{n-1} = 1| Z_n=1) &=& p+q/(n-1) \\
\Pr(Z_{n-1} = 1| Z_n=0) &=& 1/(n-1)
\end{eqnarray*}  
More generally at the $k$th step backwards, we have 
\begin{equation}\label{pi}
\pi_{11}^{(k)} = p+\frac{q}{n-k}, \quad \pi_{01}^{(k)} = \frac{1}{n-k}, \quad \pi_{10}^{(k)} = \frac{q(n-k-1)}{n-k}, \quad \pi_{00}^{(k)} = \frac{n-k-1}{n-k},
\end{equation}
where $\pi_{ij}^{(k)} = \Pr(Z_{n-k} = j| Z_{n-k+1}=i)$.  

To apply Dobrushin's central limit theorem for inhomogeneous Markov chains \citep{D,SV} we must show that $$\alpha_n^{3} \sum_{k=1}^n \V(Z_k) \to \infty$$ as $n \to \infty$, where
$$\alpha_n = \min_{1 \leq k < n} \big[1 -\max_{i,j,\ell} \big|\pi_{i\ell}^{(k)}-\pi_{j\ell}^{(k)}\big|\big].$$   
Direct calculation from \eqref{pi} gives $\alpha_n \geq q > 0$. 

The variance of $Z_k$ is $p_k(1-p_k)$ where $p_k = \Pr(Z_k=1)$, with bounds 
$$ \min\{\pi_{01}^{(n-k)}, \pi_{11}^{(n-k)}\} \leq p_{k} \leq  \max\{\pi_{01}^{(n-k)}, \pi_{11}^{(n-k)}\}, \quad 1 \leq k <n.$$
From \eqref{pi} we then have 
$$p_k(1-p_k) \geq \frac{1}{n-k} \left[\frac{q(n-k-1}{n-k}\right], \quad 1 \leq k <n,$$
and hence $\sum_{k=1}^{n-1} \V(Z_k) \to \infty$ as $n \to \infty$ as required.
\end{proof}

Finally we note the existence of a martingale related to the integrated process $(S_n)$ where $S_n = \sum_{k=1}^n T_k$, as follows.  
Let  $N(n) = \max\left\{k: \sum_{i=1}^k \mathcal{I}_i =n\right\}$ and 
$$M_n = S_{N(n)}/N(n) - \sum_{k=1}^{n-1} \sum_{r=1}^{\infty} \frac{q p^{r-1} r(r+1)}{2(N(k)+r)}.$$
\begin{theorem} $(M_n)$ is a martingale with respect to the filtration $(\mathcal{F}^*_n)$ where $\mathcal{F}^*_n$ is the $\sigma$-field generated by $\{(V(k),\mathcal{I}_k);k \leq N(n)\}$.   
\end{theorem}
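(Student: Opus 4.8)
The plan is to exhibit $(M_n)$ as the martingale part in the Doob decomposition, with respect to $(\mathcal F^*_n)$, of the sampled normalized integral $Y_n := S_{N(n)}/N(n)$, and to show that its compensator is precisely $\sum_{k=1}^{n-1}\sum_{r=1}^{\infty} qp^{r-1}r(r+1)/\{2(N(k)+r)\}$. Thus the whole content of the theorem is the single identity
$$\E[Y_{n+1}-Y_n \mid \mathcal F^*_n] = \sum_{r=1}^{\infty}\frac{qp^{r-1}\,r(r+1)}{2(N(n)+r)};$$
granted this, $M_n = Y_n - \sum_{k=1}^{n-1}\E[Y_{k+1}-Y_k\mid\mathcal F^*_k]$ is a martingale by construction, and it is finite because the inner summand is dominated by $qp^{r-1}(r+1)/2$, which is summable in $r$.

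First I would describe the path of the clock between successive reversion attempts. Since $V(k)=k$ whenever $\mathcal I_k=0$, the reverting clock increases by one at every step that is not a reversion attempt, while at a reversion attempt (an $\mathcal I_k=1$ step) it is reset to $1+T_{U(k)}$ with $U(k)$ uniform on $\{1,\dots,k\}$. Consequently, between the sampling times $N(n)$ and $N(n+1)$ there is exactly one reversion attempt, and writing $L_n:=N(n+1)-N(n)$ for the gap, the $L_n$ clock values contributing to $S_{N(n+1)}-S_{N(n)}$ form an arithmetic progression $1+\Xi_n,\,2+\Xi_n,\dots,L_n+\Xi_n$, where $\Xi_n$ is the clock value at a point chosen uniformly among the first $N(n)$ times. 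Hence
$$S_{N(n+1)}-S_{N(n)} = \sum_{j=1}^{L_n}(j+\Xi_n) = \tfrac12 L_n(L_n+1) + L_n\Xi_n.$$
The two facts I would then extract are: $L_n$ is geometric with parameter $q$ (so $\Pr(L_n=r)=qp^{r-1}$) and is conditionally independent, given $\mathcal F^*_n$, of $\Xi_n$; and, by uniformity of the reversion target, $\E[\Xi_n\mid\mathcal F^*_n]=N(n)^{-1}\sum_{k=1}^{N(n)}T_k = Y_n$.

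The computation is then immediate. Conditioning further on $L_n=r$ and using these two facts, $\E[S_{N(n+1)}\mid\mathcal F^*_n,\,L_n=r]=S_{N(n)}+\tfrac12 r(r+1)+rY_n = S_{N(n)}(N(n)+r)/N(n)+\tfrac12 r(r+1)$; dividing by $N(n+1)=N(n)+r$, the first term collapses to $Y_n$, so $\E[Y_{n+1}\mid\mathcal F^*_n,L_n=r]=Y_n+r(r+1)/\{2(N(n)+r)\}$, and averaging over $L_n$ gives the asserted increment. This mirrors the argument of Theorem \ref{thm:2}, with the normalization $N(n)$ playing the role of $n$ and the geometric inter-reversion gap accounting for the extra drift term. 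The one point that needs genuine care is the bookkeeping at the random sampling times: one must verify that $\mathcal F^*_n$ determines the clock and its integral up to time $N(n)$ but not the target of the next reversion (so that $\Xi_n$ is indeed conditionally uniform on $\{T_1,\dots,T_{N(n)}\}$), and that $L_n$ is conditionally geometric($q$) and independent of $\mathcal F^*_n$ and of $\Xi_n$. I expect this path decomposition, together with the attendant measurability checks, to absorb essentially all the work; once it is pinned down, the remaining algebra — in particular the cancellation of the factor $N(n)+r$ between numerator and denominator — is routine.
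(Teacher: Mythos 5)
Your proposal is correct and follows essentially the same route as the paper: the identity $S_{N(n+1)}-S_{N(n)}=\tfrac12 L_n(L_n+1)+L_n\Xi_n$ with $\E(\Xi_n\mid\mathcal F^*_n)=S_{N(n)}/N(n)$, followed by averaging over the geometric gap, is exactly the paper's computation of $\E\bigl(S_{N(n+1)}/N(n+1)\mid\mathcal F^*_n\bigr)$, merely repackaged as a Doob-decomposition/compensator statement rather than a direct verification of $\E(M_{n+1}\mid\mathcal F^*_n)=M_n$. The measurability and independence caveats you flag (uniformity of the reversion target over the first $N(n)$ times, geometric gap independent of $\mathcal F^*_n$) are likewise taken for granted in the paper, so no substantive difference remains.
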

\begin{proof}
The time intervals between reversions are independent geometric random variables, which we denote by $(Y_n;n\geq 1)$; thus $ \sum_{k=1}^n Y_k = N(n) +1.$  Note that $T_{N(n)+k} = T_{N(n)+1} + k -1,$ for $k=1,\dots,Y_{n+1}$ since this corresponds to a period in which there are no reversions.  It follows that 
$$S_{N(n+1)} = S_{N(n)} + \sum_{k=1}^{Y_{n+1}} (T_{N(n)+1} + k -1) = Y_{n+1} T_{N(n)+1} + \frac12 Y_{n+1}(Y_{n+1}-1).$$
Conditioning on $\mathcal{F}^*_n$ we have 
$$\E(T_{N(n)+1} | \mathcal{F}^*_n) = 1 + S_{N(n)}/N(n),$$ so that
\begin{eqnarray*}
\E\left(S_{N(n+1)}/N(n+1)|\mathcal{F}^*_n\right) &=& S_{N(n)}/N(n) + \E\left( \frac{Y_{n+1}(Y_{n+1}+1)}{N(n)+Y_{n+1}}\big| N(n)\right)\\
&=& S_{N(n)}/N(n) + \sum_{r=1}^{\infty} \frac{q p^{r-1} r(r+1)}{2(N(n)+r)}.
\end{eqnarray*}
The result now follows. 
\end{proof}
 
\subsection{History-dependent branching processes}

More generally, an analysis similar to the above can be carried through for any Markov chain with stationary transition probabilities that suffers reversions to earlier states as determined by a reverting clock. For definiteness, we briefly consider a reverting Galton-Watson branching process.  Let $Z_{k,n}$ be the number of offspring of the $k$th individual in the $n$th generation.  As usual, we assume that these variables are independent and identically distributed and we denote their p.g.f.\ by $W(s)$.  In the reverting case, we suppose that $X_n$, the population size at time $n$ is given by
$$ X_{n+1} = \sum_{k=1}^{X_{U(n)}} Z_{k,n}, \quad n \geq 1,$$
with $X_1 =1$, say, and where $U(n)$ is uniformly distributed on $\{1,\dots,n\}$. Here, and elsewhere, the empty sum is taken to be zero.  Denoting the p.g.f.\ of $X_n$ by $H_n(s)$, we then have
\begin{equation} \label{H}
H_{n+1}(s) = \frac{1}{n} \sum_{k=1}^n H_k(W(s)),
\end{equation}
where  Similar recurrences hold for the moments of $X_n$. 

The functional recurrence \eqref{H} does not appear to be tractable, however arguing as in section \ref{sec:2} we can show that the population size of the reverting Galton-Watson process develops in the same way as the ordinary Galton-Watson process but on a time scale determined by the reverting clock, $T_n$. In terms of the p.g.f\ we have
$$H_n(s) = \E H_{T_n} (s),$$
with similar expressions for the moments, probabilities of extinction, and so on. Thus various limit theorems satisfied by the ordinary Galton-Watson process will hold for the reverting version, by Anscombe's theorem.
  
\section{Conclusion}
We have considered reverting random walks $(X_n;n\geq 1)$ having this key defining property:-- at any time $n$ the next value of the process may be obtained by reverting to a value $X_U$ selected randomly from the history $(X_k;1\leq k\leq n)$ and then requiring the walk to take a step from the value $X_U$.  Such a walk does not have the Markov property.  However, when the step distributions are time homogeneous, it is shown that the reverting process is in fact a version of the usual random walk observed at a random operational time, called the reverting clock.  That is to say, the reverting walk is subordinate to (or directed by) the reverting clock process.  

It is shown that the clock process obeys a central limit theorem, and that its centred time integral yields a martingale which converges to a non-degenerate limit.  These results for the reverting clock are then used to yield the properties of the directed process $(X_n)$.  In particular, for example, we obtain an exact expression for the distribution of the reverting simple random walk and we also show that the reverting walk inherits certain limiting properties that are enjoyed by the untrammelled walk, such as a central limit theorem.

Various variants and generalizations are introduced, including walks that revert non-uniformly to the past and occasionally reverting walks whose reversions occur at the instants of a renewal process.  In particular, we consider the case in which the reversion instants constitute a Bernoulli process, having geometrically distributed intervals between successive reversions.  In principle, a similar analysis can be carried out for any suitable Markov chain with stationary transition probabilities.

Finally, we remark that the nature of many quite straightforward properties of the these reverting processes such as first passage distributions, remain open problems.

\bibliographystyle{apalike}

\vspace{5ex}
\hrule
\vspace{1ex}
\urlstyle{sf}
Email address: {\sf peter.clifford@jesus.ox.ac.uk}\\
URL: \url{https://www.stats.ox.ac.uk/~clifford}

%\vspace{1ex}
Email address: {\sf david.stirzaker@sjc.ox.ac.uk}\\
URL: \url{https://www.sjc.ox.ac.uk/discover/people/professor-david-stirzaker}

\vspace{2ex}
\hrule
\end{document}